\documentclass[english]{article}
\usepackage[T1]{fontenc}
\usepackage[latin9]{luainputenc}
\usepackage{amsmath}
\usepackage{amsthm}
\usepackage{amssymb}

\makeatletter
\theoremstyle{plain}
\newtheorem{thm}{\protect\theoremname}
  \theoremstyle{plain}
  \newtheorem{cor}[thm]{\protect\corollaryname}

\makeatother

\usepackage{babel}
  \providecommand{\corollaryname}{Corollary}
\providecommand{\theoremname}{Theorem}

\begin{document}

\title{Integral and Absolute Hodge Classes}

\author{Ryan Keast}
\maketitle
\begin{abstract}
Despite the failure of the integral Hodge conjecture, we show that
the rational Hodge conjecture implies an integral version (modulo
torsion) of the absolute Hodge conjecture.
\end{abstract}

\section{Introduction}

This very brief article is based on a talk given by the author and
the proceeding discussion after the lecture. The author would like
to thank Donu Arapura for a very important insight.

Andre-Oort predicts the algebraic closure of CM points inside a Shimura
variety is again Shimura variety. This was proven in the special case
of $A_{g}$ by J. Tsimerman using the combined tools of o-minimality
and large Galois orbits of CM abelian varieties\cite{T}. Effective
CM Hodge structures of positive weight are always embeddable into
the Hodge structure of CM abelian varieties\cite{A}. A natural question
is if we can use this embedding to produce large Galois orbits for
a whole host of varieties. The immediate road block is that it is
unknown whether or not this embedding is algebraic, or even absolute.

Even when we assume the Hodge conjecture there seemed to be a gap.
Let $X$ and $Y$ be smooth projective varieties with $H^{n}(X,\mathbb{Z})$,
$H^{n}(Y,\mathbb{Z})$ both free. Assume there exists an integral
isomorphism $j$ of Hodge structures. The absolute Hodge conjecture
implies that after applying $\sigma\in Auto(\mathbb{C})$, $\sigma j$
is a rational morphism of Hodge structures. What is not clear apriori
is if $\sigma j$ is still an integral isomorphism. If we are to establish
a relationship between the sizes of the Galois orbits, integrality
must be preserved, else we could also be counting isogenies.

Surprisingly the literature gave no answer to the question. It appeared
to be silent on an even more basic question: if $v\in H^{n,n}(X,\mathbb{C})\cap H^{2n}(X,\mathbb{Z})/torsion$,
is $\sigma v\in$$H^{n,n}(X^{\sigma},\mathbb{C})\cap H^{2n}(X^{\sigma},\mathbb{Z})/torsion$.
If we assumed the integral Hodge conjecture, then $v=\sum n[Z]$ $n\in\mathbb{Z}$
and $\sigma v=\sum n[Z^{\sigma}]$. Alas, the integral Hodge conjecture
is false.

Counterexamples to the integral Hodge conjecture lie in two camps:
torsion and non-torsion. We will ignore the torsion counterexamples
for now. A basic non-torsion counterexample to the integral Hodge
conjecture was given by Kollar(cf\cite{V}). If we have of a very
general threefold hypersurface of degree 125, then 5 divides the degree
of every curve. $H^{2}(X,\mathbb{Z})\cap H^{1,1}(X,\mathbb{C})$ is
1-dimension and given by cohomology class of a hyperplane section
$h$. Assume $H^{4}(X,\mathbb{Z})\cap H^{2,2}(X,\mathbb{C})$ is generated
by curve $c$. 

Since the pairing of $H^{2}$and $H^{4}$ is unimodular$\int h\cap[c]=1$.
This integral also gives the intersection number, so it must be divisible
by five. We arrive at a contradiction. 

In this case, the rational Hodge conjecture is known for $H^{4}.$
Let $c$ be a curve such that $[c]$ generates $H^{4}(X,\mathbb{Q})\cap H^{2,2}(X,\mathbb{C})$.

$\int h\cap[c]=n$. Then $\frac{1}{n}[c]$ is integral generates $H^{4}(X,\mathbb{Z})\cap H^{2,2}(X,\mathbb{C})$.
$\int\sigma h\cap[\sigma c]=\int h\cap[c]=n$, so $\frac{1}{n}[c^{\sigma}]$
also generates $H^{4}(X^{\sigma},\mathbb{Z})\cap H^{2,2}(X^{\sigma},\mathbb{C})$.
Thus even in this counterexample of the integral Hodge conjecture,
the integrality is preserved by $\sigma$.

This example convinced the author that the preservation of integrality
was at the very least not trivially false. In fact, in the proceeding
section we will show that despite the failure of the integral Hodge
conjecture the integrality of $\sigma j$ is guaranteed by the rational
Hodge conjecture. In other words, the rational conjecture implies
an integral version of the absolute Hodge conjecture modulo torsion. 

\section{Absolute and Integral Hodge Classes}

The proof involves passing to the étale cohomology: 
\begin{thm}
Assuming the rational Hodge conjecture, if $v\in H^{n,n}(X,\mathbb{C})\cap H^{2n}(X,\mathbb{Z})/torsion$,
then for $\sigma\in Aut(\mathbb{C})$ $\sigma v\in$$H^{n,n}(X^{\sigma},\mathbb{C})\cap H^{2n}(X^{\sigma},\mathbb{Z})/torsion$
\end{thm}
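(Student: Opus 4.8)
The plan is to use the fact that $\ell$-adic \'etale cohomology is a purely algebraic invariant, so that $\sigma$ acts on it and carries the natural integral lattice of $X$ isomorphically onto that of $X^{\sigma}$, while the rational Hodge conjecture is invoked to guarantee that the transported class is genuinely rational rather than merely $\ell$-adic. First I would fix, for each prime $\ell$, Artin's comparison isomorphism $H^{2n}(X,\mathbb{Z})\otimes\mathbb{Z}_{\ell}\cong H^{2n}_{\mathrm{et}}(X,\mathbb{Z}_{\ell})$, which identifies the integral structures modulo torsion. Writing $L_{X}=H^{2n}(X,\mathbb{Z})/\mathrm{torsion}$ for the resulting free $\mathbb{Z}$-module, the hypothesis that $v$ is integral modulo torsion says precisely that $v\in L_{X}\otimes\mathbb{Z}_{\ell}$ inside $L_{X}\otimes\mathbb{Q}_{\ell}$ for every $\ell$. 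I would also record the elementary local-global principle $L_{X^{\sigma}}=\bigcap_{\ell}\left(L_{X^{\sigma}}\otimes\mathbb{Z}_{\ell}\right)$ taken inside $L_{X^{\sigma}}\otimes\mathbb{Q}$, which reduces integrality modulo torsion of a rational class on $X^{\sigma}$ to integrality in each $\mathbb{Z}_{\ell}$-lattice.

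Next I would invoke the rational Hodge conjecture to write $v=\sum_{i}q_{i}[Z_{i}]$ with $q_{i}\in\mathbb{Q}$ and $Z_{i}$ algebraic cycles on $X$. Since \'etale cohomology and its cycle class map are $\sigma$-equivariant, with the class of $Z_{i}$ carried to the class of $Z_{i}^{\sigma}$, the transported class equals $\sigma v=\sum_{i}q_{i}[Z_{i}^{\sigma}]$, an algebraic and hence Hodge class lying in $H^{n,n}(X^{\sigma},\mathbb{C})\cap H^{2n}(X^{\sigma},\mathbb{Q})$. This is the step in which algebraicity does the essential work: a priori the $\sigma$-action only produces a class with $\mathbb{Q}_{\ell}$-coefficients, possibly depending on $\ell$, and it is the existence of the algebraic cycles $Z_{i}^{\sigma}$ that pins $\sigma v$ down to a single rational class of type $(n,n)$.

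Finally I would assemble the two halves. The isomorphism $H^{2n}_{\mathrm{et}}(X,\mathbb{Z}_{\ell})\cong H^{2n}_{\mathrm{et}}(X^{\sigma},\mathbb{Z}_{\ell})$ induced by $\sigma$ carries $L_{X}\otimes\mathbb{Z}_{\ell}$ isomorphically onto $L_{X^{\sigma}}\otimes\mathbb{Z}_{\ell}$ and sends $v$ to $\sigma v$; as $v\in L_{X}\otimes\mathbb{Z}_{\ell}$ for every $\ell$, we obtain $\sigma v\in L_{X^{\sigma}}\otimes\mathbb{Z}_{\ell}$ for every $\ell$. The local-global principle then gives $\sigma v\in L_{X^{\sigma}}$, that is, $\sigma v$ is integral modulo torsion, which together with the previous paragraph completes the proof. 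I expect the main obstacle to be conceptual rather than computational: the $\sigma$-action is naturally defined only on $\ell$-adic cohomology, so one must reconcile the per-prime $\ell$-adic integrality with the need for a single integral Betti class, and it is exactly the rational Hodge conjecture, through the rationality of $\sigma v$, that licenses this passage.
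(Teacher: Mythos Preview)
Your proposal is correct and follows essentially the same approach as the paper: write $v$ as a rational combination of algebraic cycle classes via the rational Hodge conjecture, use Artin's comparison and the $\sigma$-equivariance of $\ell$-adic \'etale cohomology to transport $\ell$-adic integrality from $v$ to $\sigma v$ for every $\ell$, and then conclude genuine integrality of the (rational) class $\sigma v$ from the local--global principle $\mathbb{Q}\cap\bigcap_{\ell}\mathbb{Z}_{\ell}=\mathbb{Z}$. Your write-up is in fact somewhat more explicit than the paper's in isolating the two ingredients (rationality from algebraic cycles, $\ell$-adic integrality from \'etale cohomology) and the principle that glues them.
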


\begin{proof}
If we assume the rational Hodge conjecture, $v=\sum a_{i}[Z_{i}]$
with $a_{i}\in\mathbb{Q}$. $\sigma v=\sum a_{i}[Z_{i}^{\sigma}]$.
By the Artin Comparison theorem\cite{M}, for each $l$ we have a
canonical isomorphism $A_{l}:$$H^{2n}(X(\mathbb{C}),\mathbb{Z}_{l})\rightarrow H^{2n}(X_{et,}\mathbb{Z}_{l})$.
We have the embedding$H^{2n}(X(\mathbb{C}),\mathbb{Z})\hookrightarrow H^{2n}(X(\mathbb{C}),\mathbb{Z}_{l})$.
The image of $v=\sum a_{i}[Z_{i}]$ and $\sigma v=\sum a_{i}[Z_{i}^{\sigma}]$
are identical with respect to the étale cohomology, specifically they
are both integral with respect to $\mathbb{Z}_{l}$ in $H^{2n}(X_{et,}\mathbb{Z}_{l})\cong H^{2n}(X_{et,}^{\sigma}\mathbb{Z}_{l}).$
It follows that $\sigma v=\sum a_{i}[Z_{i}^{\sigma}]\in H^{2n}(X^{\sigma}(\mathbb{C}),\mathbb{Z}_{l})$.
Let $\{e_{n}\}$ be a basis for $H^{n,n}(X^{\sigma},\mathbb{C})\cap H^{2n}(X^{\sigma},\mathbb{Z})/torsion$.
We rewrite $\sigma v=\sum b_{n}e_{n}$. We will now show that $b_{n}$
are integers. If $\sum b_{n}e_{n}\in$$H^{2n}(X^{\sigma}(\mathbb{C}),\mathbb{Z})\otimes Z_{l}$,
then each $b_{n}$ must be an $l$-adic integer. Since this is true
for all $l$, $b_{n}\in\mathbb{Z}$.
\end{proof}
\begin{cor}
Assume the rational Hodge conjecture. If $H^{n}(X,\mathbb{C})$ and
\textup{$H^{n}(Y,\mathbb{C})$} are free and isomorphic as integral
Hodge structures, then so are $H^{n}(X^{\sigma},\mathbb{C})$ and
\textup{$H^{n}(Y^{\sigma},\mathbb{C}).$}
\end{cor}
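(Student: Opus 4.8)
The plan is to reduce the corollary to the theorem by unwinding what an integral isomorphism of Hodge structures really is.

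The plan is to reduce the Corollary to the Theorem via the standard dictionary between morphisms of Hodge structures and Hodge classes on a product. Writing $d = \dim X$ and working throughout modulo torsion (so that ``free'' is read as $H^{n}(X,\mathbb{Z})$ and $H^{n}(Y,\mathbb{Z})$ being torsion-free lattices), an integral isomorphism of Hodge structures $j \colon H^{n}(X,\mathbb{Z}) \to H^{n}(Y,\mathbb{Z})$ is the same datum as an integral Hodge class of type $(d,d)$ on $X \times Y$. Concretely, $j$ is an element of $\mathrm{Hom}(H^{n}(X),H^{n}(Y)) = H^{n}(X)^{\vee} \otimes H^{n}(Y)$, and it is a morphism of Hodge structures exactly when it has type $(0,0)$ there. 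Poincar\'e duality gives $H^{n}(X)^{\vee} \cong H^{2d-n}(X)(d)$ as integral Hodge structures modulo torsion, so $j$ becomes a $(d,d)$-class in $H^{2d-n}(X) \otimes H^{n}(Y)$, which the K\"unneth formula embeds into $H^{2d}(X \times Y)$. Thus $j$ corresponds to a class $v \in H^{d,d}(X\times Y,\mathbb{C}) \cap H^{2d}(X \times Y, \mathbb{Z})/\mathrm{torsion}$.

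With this translation in hand, I would apply the Theorem to the variety $X \times Y$ and the class $v$, taking the Theorem's index to be $d$. This yields that $\sigma v$ is again an integral Hodge class modulo torsion on $(X\times Y)^{\sigma} \cong X^{\sigma} \times Y^{\sigma}$. Reversing the dictionary on $X^{\sigma}$ and $Y^{\sigma}$ --- K\"unneth projection followed by Poincar\'e duality --- turns $\sigma v$ back into an integral morphism of Hodge structures $\sigma j \colon H^{n}(X^{\sigma}) \to H^{n}(Y^{\sigma})$.

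To upgrade ``morphism'' to ``isomorphism,'' I would run the same argument on $j^{-1}$, which is itself an integral isomorphism of Hodge structures and hence corresponds to an integral Hodge class $w$ on $Y \times X$. This produces an integral morphism $\sigma(j^{-1}) \colon H^{n}(Y^{\sigma}) \to H^{n}(X^{\sigma})$. Because the $\sigma$-action is induced by base change and is therefore compatible with composition of correspondences, the relations $j \circ j^{-1} = \mathrm{id}$ and $j^{-1}\circ j = \mathrm{id}$ transport to $\sigma j \circ \sigma(j^{-1}) = \mathrm{id}$ and $\sigma(j^{-1}) \circ \sigma j = \mathrm{id}$. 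Hence $\sigma j$ is invertible with integral inverse, i.e. an integral isomorphism of Hodge structures, giving $H^{n}(X^{\sigma},\mathbb{Z})/\mathrm{torsion} \cong H^{n}(Y^{\sigma},\mathbb{Z})/\mathrm{torsion}$.

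The step I expect to be the main obstacle is verifying that the Poincar\'e--K\"unneth dictionary is integral rather than merely rational, and that it is compatible with the $\sigma$-action so that $\sigma v$ genuinely corresponds to $\sigma j$ on the conjugated varieties. Poincar\'e duality is integral modulo torsion since the cup-product pairing is unimodular on the free quotient, and K\"unneth is integral when the relevant cohomology is torsion-free; the delicate point is checking that applying $\sigma$ commutes with these identifications. This comes down to the functoriality of the \'etale comparison invoked in the Theorem together with the identification $(X\times Y)^{\sigma} \cong X^{\sigma} \times Y^{\sigma}$. Once that compatibility is pinned down, the Corollary follows formally from the Theorem.
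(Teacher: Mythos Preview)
Your proposal is correct and follows essentially the same route as the paper: view the integral isomorphism $j$ (and its inverse) as an integral Hodge class on $X\times Y$ via the Poincar\'e--K\"unneth dictionary, apply the Theorem to deduce that $\sigma j$ and $\sigma(j^{-1})$ are again integral, and observe that they remain mutual inverses after conjugation. The paper's proof is a two-line sketch that leaves the dictionary, the invocation of the Theorem, and the compatibility checks entirely implicit; your version spells out exactly the steps the paper compresses into the phrases ``$j$ is given by an algebraic correspondence'' and ``are still both integral.''
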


\begin{proof}
By assumption, there exists a integral morphism of Hodge structures
$j:H^{n}(X,\mathbb{C})\rightarrow H(Y,\mathbb{C})$ with an integral
inverse $j^{-1}$. If we assume the rational Hodge conjecture $j$
is given by an algebraic correspondence, $\sigma j$ and $\sigma j^{-1}$
are still inverses of each other and are still both integral.
\end{proof}

\end{document}